\newcommand{\ignore}[1]{}
\newtheorem{theorem}{Theorem}
\newtheorem{lemma}{Lemma}
\newtheorem{corollary}{Corollary}
\begin{document}

\begin{frontmatter}

\title{A fractional-order model for CoViD-19 dynamics with reinfection and the importance of quarantine}

\author{Jo\~ao P. S. Maur\'icio de Carvalho$^{\star}$\let\thefootnote\relax\footnote{$^\star$ corresponding author}$^a$ and Beatriz Moreira-Pinto$^{b}$\\
 \medskip
  $^a$Faculty of Sciences, University of Porto, \\
 Rua do Campo Alegre s/n, 4169-007 Porto, Portugal\\
 \textit{up200902671@fc.up.pt}\\
 \medskip
  $^b$UCIBIO, REQUIMTE, Faculty of Pharmacy, University of Porto \\
 Rua de Jorge Viterdo Ferreira, 228, 4050-313 Porto, Portugal \\
 \textit{abeatriz\_pinto@hotmail.com}\\
}

\begin{abstract}
Coronavirus disease 2019 (CoViD-19) is an infectious disease caused by severe acute respiratory syndrome coronavirus 2 (SARS-CoV-2). Among many symptoms, cough, fever and tiredness are the most common. People over 60 years old and with associated comorbidities are most likely to develop a worsening health condition. This paper proposes a non-integer order model to describe the dynamics of CoViD-19 in a standard population. The model incorporates the reinfection rate in the individuals recovered from the disease. Numerical simulations are performed for different values of the order of the fractional derivative and of reinfection rate. The results are discussed from a biological point of view.   
\end{abstract}

\begin{keyword}
COVID-19, reinfection, mathematical model, epidemic model, fractional calculus
\end{keyword}

\end{frontmatter}

\section{Introduction}

At the end of the year 2019, a newly discovered coronavirus named Severe Acute Respiratory Syndrome Coronavirus 2 (SARS-CoV-2) emerged in Wuhan, China \cite{Bavel,Rev3_1}. The disease was later designated by the World Health Organization (WHO) as Coronavirus Disease 2019 or CoViD-19. CoViD-19 has demonstrated a great capacity of propagation directly through human-to-human contact and the epidemic quickly began to spread on a worldwide level, claiming multiple lives throughout its course \cite{Dao}. At the time of writing, worldwide CoViD-19 cases surpassed 185 million and caused more than 4 million deaths \cite{who}. Although infected individuals may be asymptomatic, the most common mild symptoms range from coughing and wheezing to a lack of smell and taste \cite{Li}. In people over 65 years old and/or with pre-existing conditions, such as diabetes, obesity and hypertension, the risk of worsening symptoms is higher \cite{Rothan, Bompard, Rodriguez, Wang}. Respiratory, neurological and hepatic diseases are the most serious diseases caused by SARS-CoV-2 \cite{Rev3_3}.

With the increasing number of cases and deaths, strategies have been adopted to slow down the spread of the virus like social distancing, the use of face masks and isolation of infected people \cite{Berger}.

Several mathematical models have been proposed to understand the dynamics of CoViD-19. These are extremely valuable to comprehend CoViD-19 mechanism of transmission, as well as for predicting disease behavior and controlling possible outbreaks \cite{Jewell, Thomas, Zeb}. \c{C}akan \cite{akan} proposed a mathematical SEIR epidemic model to evaluate the impact of CoViD-19 in a hospital environment. The author concluded that an increase in contact rates between susceptible and infected individuals may lead to hospitals breakdown by depleting their resources. Buonomo \cite{springer_reinfection} proposed a mathematical SIRI model to analyze the effects of a vaccine on a population where CoViD-19 was predominant, suggesting that the incidence of the disease can be reduced by correct and quick information given to the population. Khoshnaw {\it el al}.~\cite{Khoshnaw} studied the sensitivity of important parameters in the reproduction number variation. The results show that the contact rate, the exposure rate during quarantine and the transition rate of exposed individuals play a key role in the spread of the disease. 


Non-integer order calculus, known as fractional calculus (FC), generalizes integral and differential calculus. Briefly, fractional order (FO) operators can be representative of memory functions, making the dynamics of variables more realistic \cite{JPCarvalho,Rev3_2, Rev3_5}. Recent studies on CoViD-19 have incorporated FC \cite{Shah1, Shah2, Shah3, Shah4}. Epidemiological and biomathematical models are two of several areas where FC is being applied \cite{Samko,Rev3_4,Rev3_6}. Ahmad {\it et al}.~\cite{Ahmad} performed simulations of a fractional model for CoViD-19 transmission considering different values of the non-integer order derivative $\alpha$, and concluding that $\alpha = 0.97$ is the value that best fits the real data. Also, Zhang {\it et al}.~\cite{springer} developed a non-integer order model for CoViD-19 dynamics. The authors analyzed the reproduction number and investigated the asymptotic stability of the proposed model. The numerical simulations indicate that there is good agreement between the theoretical and the numerical results.

These models then encouraged us to formulate a FO mathematical model for population dynamics in the presence of CoViD-19 and analyze the impact of isolation, reinfection and recovery rates of the individuals. The main goal of this work is to understand how FC influences the dynamics of populations over time and try to understand how quarantine can be so important in reducing the number of CoViD-19 cases. Our results could help policy makers to devise strategies to reduce heavy economic and social burden of SARS-CoV-2 infection in the world.

In Section \ref{descricao} we describe the model and prove that it is positive and bounded. In Section \ref{R0_SFE} we calculate the basic reproduction number, study the stability around the disease-free equilibrium point and perform the sensitivity analysis of relevant parameters in the spread of CoViD-19. In Section \ref{RESULTS} we simulate the model for all relevant parameters and we comment on their results. We draw some conclusions and present future work in Section \ref{conclusions}.

\section{Model interpretation} \label{descricao}
We adapted and adjusted the \emph{SIQR} model (susceptible -- infected -- quarentined -- recovered) of the authors of \cite{ZhienMa2009}. Four classes of individuals incorporate the model: susceptible, $S(t)$, infected, $I(t)$, isolated/quarantined, $Q(t)$, and recovered, $R(t)$. With respect to our model, we define $\Omega = \{ (\lambda,\beta,\mu,r,\sigma,\theta) \in (\mathbb{R}^+)^6 \}$ as the set of parameters. The recruitment rate of susceptible individuals is given by $\lambda^{\alpha}$. The contact rate between susceptible and recovered individuals and infected ones is given by $\beta^{\alpha}$. After contact, susceptible and recovered individuals move into the infected class. The term $\sigma^{\alpha} I$ represents the fraction of infected individuals who became isolated. Isolated individuals recover from the disease at a rate $\theta^{\alpha}$. The susceptibility of a recovered individual to be reinfected is given by $p^{\alpha}$ \cite{springer_reinfection}. Thus, we consider that $r^{\alpha} = \beta^{\alpha} p^{\alpha}$ is the reinfection rate of individuals who have already recovered from the disease. Parameters $\mu_S^{\alpha}$, $\mu_I^{\alpha}$, $\mu_Q^{\alpha}$ and $\mu_R^{\alpha}$ are the natural death rates of susceptible, exposed, infected, isolated and recovered individuals, respectively. It is assumed an equal value for every natural death rate to simplify algebraic calculations, \emph{i.e.} $\mu_S^{\alpha} = \mu_I^{\alpha} = \mu_Q^{\alpha} = \mu_R^{\alpha} \equiv \mu^{\alpha}$. Figure \ref{boxes} illustrates the interaction between the classes of susceptible, infected, quarentined and recovered individuals in model \eqref{modelo}. A description of the model variables and all parameters can be found in Table \ref{variables} and Table \ref{tabela}, respectively.

The system of FO equations is given by

\begin{equation}\label{modelo}
\begin{array}{lcl}
\dfrac{d^{\alpha}S}{dt^{\alpha}} = \lambda^{\alpha} - \beta^{\alpha}SI - \mu^{\alpha} S \\
\\
\dfrac{d^{\alpha}I}{dt^{\alpha}} = \beta^{\alpha}SI + r^{\alpha} RI - \sigma^{\alpha}I - \mu^{\alpha}I \\
\\
\dfrac{d^{\alpha}Q}{dt^{\alpha}} = \sigma^{\alpha}I - \theta^{\alpha}Q - \mu^{\alpha}Q \\
\\
\dfrac{d^{\alpha}R}{dt^{\alpha}} = \theta^{\alpha}Q -r^{\alpha} RI - \mu^{\alpha}R,
\end{array}
\end{equation}

\medskip

\noindent where $\alpha \in (0, 1]$ is the order of the fractional derivative. We use the concept of a FO derivative proposed by Caputo:

\begin{equation}
\label{caputo}
\begin{array}{lcl}
\dfrac{d^{\alpha}y(t)}{{dt}^{\alpha}}  =  I^{p-\alpha} y^{(p)} (t), \,\,\,\,\, t>0,
\end{array}
\end{equation}

\medskip

\noindent where $p=[\alpha]$ is the integer part of $\alpha$, $y^{(p)}$ is the $p$\,-th derivative of $y(r)$ and $I^{p_1}$ is the Riemann-Liouville fractional integral (see \cite{Samko} and references therein)

\begin{equation}
\label{liouville}
\begin{array}{lcl}
I^{p_1} z(t)  =  \dfrac{1}{\Gamma(p_1)} \displaystyle \int_0^t (t - t')^{p_1 - 1} z(t') dt'.
\end{array}
\end{equation}

\begin{table}[!h]
\caption{Description of the variables of model (\ref{modelo})}
\medskip
\centering
\def\arraystretch{1.2} 
\begin{tabular}{  lr}
\hline
\textbf{Variable} & \textbf{Symbol} \\
\hline
Susceptible population & $S(t)$   \\
Infected population & $I(t)$  \\
Isolated population & $Q(t)$  \\
Recovered population & $R(t)$  \\
\hline
\end{tabular}
\label{variables}
\end{table}

\usetikzlibrary{arrows,positioning}
\begin{center}
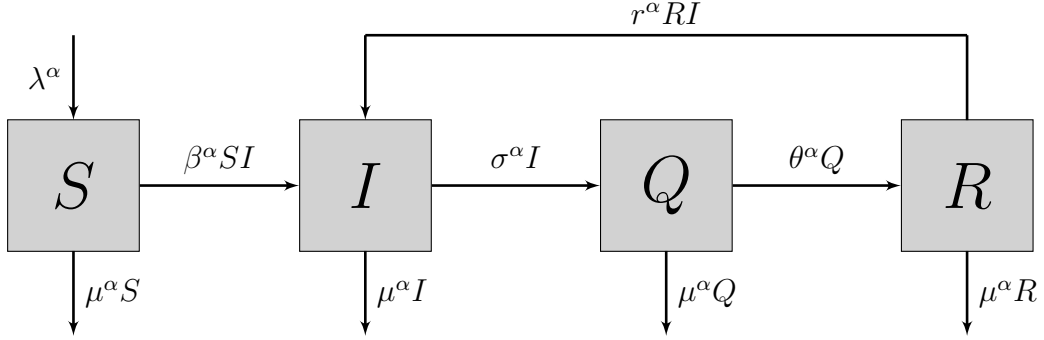
\begin{figure}[ht!]
\begin{tikzpicture}
[
auto,
>=latex',
every node/.append style={align=center},
int/.style={draw, minimum size=1.75cm}
]

    \node [fill=lightgray!70,int] (S)             {\Huge $S$};
    \node [fill=lightgray!70,int, right=3cm] (I) {\Huge $I$};
    \node [fill=lightgray!70,int, right=7cm] (Q) {\Huge $Q$};
    \node [fill=lightgray!70,int, right=11cm] (R) {\Huge $R$};
    
    \node [below=of S] (s) {} ;
    \node [below=of I] (i) {} ;
    \node [below=of Q] (q) {};
     \node [below=of R] (r) {};
    
    \node [above=of S] (os) {};
    \node [above=of I] (oi) {};
    \node [above=of Q] (oq) {};
    \node [above=of R] (or) {};
    
    \coordinate[right=of I] (out);
    \path[->, auto=false,line width=0.35mm]
    			(S) edge node {$\beta^{\alpha}SI$ \\[.6em]} (I)
                          (I) edge node {$\sigma^{\alpha}I$       \\[.6em] } (Q) 
                            (Q) edge node {$\theta^{\alpha}Q$       \\[.6em] } (R) ;                     

    \path[->, auto=false,line width=0.35mm]
    			(S) (0,-0.87cm) edge [] node[right]{$\mu^{\alpha} S$} (0,-2cm) (s)
			(os) (0,2cm) edge [] node[left]{$\lambda^{\alpha}$} (0,0.87cm) (S) 
    			(I) (3.88,-0.87cm) edge [] node[right]{$\mu^{\alpha}  I$} (3.88,-2cm) (i) 
    			(Q) (11.88,-0.87cm) edge [] node[right]{$\mu^{\alpha} R$} (11.88,-2cm) (q) 
			(Q) (7.88,-0.87cm) edge [] node[right]{$\mu^{\alpha}  Q$} (7.88,-2cm) (q);

    \path[-, auto=false,line width=0.35mm]
    			(R) (3.88,0.87cm) edge [<-] node[right]{} (3.88,2cm) (or)
			(or) (3.88,2cm) edge [] node[above]{$r^{\alpha}RI$} (11.88,2cm) (oi)
			(oi) (11.88,2cm) edge [] node[below]{} (11.88,0.87cm) (I) ;


\end{tikzpicture}
\caption{\small Schematic diagram of model \eqref{modelo}. Boxes represent compartments, and arrows indicate the flow between the compartments.}
\label{boxes}
\end{figure}
\end{center}

\subsection{Model properties analysis}

The solutions of the system (\ref{modelo}) remain non-negative for the entire domain, $t>0$. Let $\mathbb{R}_+^4 = \{x \in \mathbb{R}^4 \,\, | \,\, x \geq 0 \}$ and $x(t) = \left(S(t), I(t), Q(t), R(t) \right)^T$. First, we quote the following Generalized Mean Value Theorem \cite{GMVT} and corollary.

\begin{lemma}
\cite{GMVT} Suppose that $f(x) \in C[a,b]$ and $D_a^{\alpha}f(x) \in C(a,b]$, where $0<\alpha \leq 1$, thus

\begin{equation}
\label{lema1}
\begin{array}{lcl}
	f(x)  =  f(a)+\dfrac{1}{\Gamma(\alpha)}(D_a^{\alpha}f)(\xi) \cdot (x-a)^{\alpha}
\end{array}
\end{equation}

\medskip

\noindent for $a \leq \xi \leq x, \forall x \in (a,b]$ and $\Gamma(\cdot)$ is the gamma function.
\end{lemma}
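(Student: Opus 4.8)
The plan is to reduce the statement to the ordinary weighted mean value theorem for integrals, once the Caputo operator \eqref{caputo} has been inverted by the Riemann--Liouville integral \eqref{liouville}. First I would invoke the fundamental theorem of fractional calculus: for $0<\alpha\le 1$ the operator $I^{\alpha}$ is a left inverse of the Caputo derivative, so that under the stated regularity ($f\in C[a,b]$ and $D_a^{\alpha}f\in C(a,b]$) one has
\begin{equation*}
f(x)-f(a)=I^{\alpha}\big(D_a^{\alpha}f\big)(x)=\dfrac{1}{\Gamma(\alpha)}\int_a^x (x-t)^{\alpha-1}\,(D_a^{\alpha}f)(t)\,dt .
\end{equation*}
This identity is where all of the fractional content sits; everything after it is classical.

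Next I would apply the weighted mean value theorem for integrals to the right-hand side. The weight $w(t)=(x-t)^{\alpha-1}$ is nonnegative on $[a,x)$ and keeps a constant sign, while $D_a^{\alpha}f$ is continuous on $(a,x]$. Hence there exists $\xi\in[a,x]$ such that
\begin{equation*}
\int_a^x (x-t)^{\alpha-1}\,(D_a^{\alpha}f)(t)\,dt=(D_a^{\alpha}f)(\xi)\int_a^x (x-t)^{\alpha-1}\,dt .
\end{equation*}
A direct computation gives $\int_a^x (x-t)^{\alpha-1}\,dt=(x-a)^{\alpha}/\alpha$, and substituting back yields
\begin{equation*}
f(x)=f(a)+\dfrac{(D_a^{\alpha}f)(\xi)}{\alpha\,\Gamma(\alpha)}(x-a)^{\alpha}=f(a)+\dfrac{1}{\Gamma(\alpha+1)}(D_a^{\alpha}f)(\xi)\,(x-a)^{\alpha},
\end{equation*}
which is the asserted formula, recalling that $\Gamma(\alpha+1)=\alpha\,\Gamma(\alpha)$.

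The delicate point, and the step I expect to be the main obstacle, is the singular kernel: when $\alpha<1$ the weight $(x-t)^{\alpha-1}$ blows up as $t\to x^{-}$, so the integrals above are improper and the elementary continuous-weight mean value theorem does not apply verbatim. I would therefore justify the weighted mean value step through the version that only requires the weight to be integrable and of one sign together with continuity of $D_a^{\alpha}f$ on $(a,x]$; the integrability of $(x-t)^{\alpha-1}$ near $t=x$, guaranteed precisely by $\alpha>0$, is exactly what makes $I^{\alpha}(D_a^{\alpha}f)(x)$ finite. A secondary technical matter is confirming that the inversion identity $I^{\alpha}D_a^{\alpha}f=f-f(a)$ genuinely holds under the weak hypotheses $f\in C[a,b]$ and $D_a^{\alpha}f\in C(a,b]$, rather than under the more comfortable assumption $f\in C^1[a,b]$; this is the content one must extract from the references on fractional calculus cited in the excerpt.
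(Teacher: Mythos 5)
The paper itself offers no proof of this lemma: it is quoted with a citation to the Odibat--Shawagfeh reference, so the only meaningful comparison is with that standard argument, and your route --- invert the Caputo derivative via the Riemann--Liouville integral, then apply the weighted mean value theorem for integrals with the one-signed integrable weight $(x-t)^{\alpha-1}$ --- is exactly that argument, including the two technical caveats you correctly flag (the singular but integrable kernel, and the validity of $I^{\alpha}D_a^{\alpha}f=f-f(a)$ under the stated regularity).

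However, your final step contains a genuine error of reconciliation. Your computation correctly yields
\begin{equation*}
f(x)=f(a)+\frac{1}{\Gamma(\alpha+1)}\,(D_a^{\alpha}f)(\xi)\,(x-a)^{\alpha},
\end{equation*}
but this is \emph{not} the formula asserted in the lemma, which has $1/\Gamma(\alpha)$ in place of $1/\Gamma(\alpha+1)$. The identity $\Gamma(\alpha+1)=\alpha\,\Gamma(\alpha)$, which you invoke to claim agreement, shows precisely the opposite: the two constants differ by the factor $\alpha$ and coincide only when $\alpha=1$. What your (otherwise correct) derivation actually exposes is that the constant in the lemma as quoted is wrong --- an error propagated through the literature citing this result. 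A one-line check: take $f(x)=(x-a)^{\alpha}$, for which $D_a^{\alpha}f\equiv\Gamma(\alpha+1)$ is constant; then $f(x)-f(a)=(x-a)^{\alpha}$ agrees with $\frac{1}{\Gamma(\alpha+1)}(D_a^{\alpha}f)(\xi)(x-a)^{\alpha}$ but not with $\frac{1}{\Gamma(\alpha)}(D_a^{\alpha}f)(\xi)(x-a)^{\alpha}=\alpha\,(x-a)^{\alpha}$. So you should either assert and prove the corrected statement with $\Gamma(\alpha+1)$, or explicitly note that the quoted formula is off by a factor of $\alpha$; claiming your conclusion "is the asserted formula" is false as written. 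Fortunately, for the way the lemma is used in this paper (Corollary \ref{corolario} and the positive-invariance theorem only use the sign of $D_a^{\alpha}f$ times a positive constant), the discrepancy is harmless: both versions give the same monotonicity conclusions.
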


\begin{corollary} \label{corolario}
Let $f(x) \in C[a,b]$ and $D_a^{\alpha}f(x) \in C(a,b]$, for $0<\alpha \leq 1$.

\begin{enumerate}
\item If $D_a^{\alpha}f(x) \geq 0$, $\forall x \in (a,b)$, then $f(x)$ is non-decreasing for each $x \in [a,b]$; \\
\item If $D_a^{\alpha}f(x) \leq 0$, $\forall x \in (a,b)$, then $f(x)$ is non-increasing for each $x \in [a,b]$.
\end{enumerate}
\end{corollary}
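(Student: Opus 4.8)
The plan is to derive both monotonicity statements directly from the Generalized Mean Value Theorem (Lemma 1), exploiting the fact that the only sign-sensitive factor in the representation \eqref{lema1} is the fractional derivative itself. First I would treat part (1). Fixing two points $x_1 < x_2$ in $[a,b]$ and applying Lemma 1 on the subinterval $[x_1,x_2]$, I obtain a $\xi \in [x_1,x_2]$ with
\[
f(x_2) = f(x_1) + \frac{1}{\Gamma(\alpha)}(D^{\alpha}f)(\xi)\,(x_2-x_1)^{\alpha}.
\]
Since $\Gamma(\alpha) > 0$ and $(x_2-x_1)^{\alpha} \geq 0$, the sign of the increment $f(x_2)-f(x_1)$ is governed entirely by $(D^{\alpha}f)(\xi)$. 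Under the hypothesis $D_a^{\alpha}f \geq 0$ on $(a,b)$, this increment is non-negative, so $f(x_1) \leq f(x_2)$; as $x_1,x_2$ were arbitrary, $f$ is non-decreasing on $[a,b]$.

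Part (2) would then follow by the identical argument with the inequality reversed: when $D_a^{\alpha}f \leq 0$ on $(a,b)$, the representation forces $f(x_2) - f(x_1) \leq 0$, so that $f$ is non-increasing. Alternatively, one can apply part (1) to $-f$, using the linearity of the Caputo operator, $D_a^{\alpha}(-f) = -D_a^{\alpha}f$, to reduce the second case to the first.

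The step I expect to be the main obstacle is justifying that the fractional derivative appearing when Lemma 1 is invoked on the subinterval $[x_1,x_2]$ inherits the assumed sign. The Caputo derivative depends on its base point, so a priori the theorem on $[x_1,x_2]$ produces $D_{x_1}^{\alpha}f(\xi)$ rather than $D_a^{\alpha}f(\xi)$, and one must argue that the condition imposed at base point $a$ controls the derivative at the shifted base point $x_1$. Handling this base-point consistency carefully -- or else restricting the increments to be measured from the left endpoint $a$, where \eqref{lema1} applies verbatim -- is the delicate point; everything else reduces to reading off the sign of a product of manifestly non-negative quantities.
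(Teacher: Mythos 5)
You have located the crux exactly, but your proposal leaves it open, and it cannot be closed. Note first that the paper does not actually prove Corollary \ref{corolario}: it is quoted, together with Lemma 1, from \cite{GMVT}, so the only question is whether your derivation stands on its own. It does not. Applying Lemma 1 on a subinterval $[x_1,x_2]$ produces the Caputo derivative $D_{x_1}^{\alpha}f(\xi)$ with base point $x_1$, while the hypothesis constrains only $D_a^{\alpha}f$; for $0<\alpha<1$ the two differ by a memory term involving all of $f$ on $[a,x_1]$, and no sign relation between them holds in general. Your fallback --- taking all increments from the left endpoint $a$, where \eqref{lema1} applies verbatim --- proves only $f(x)\geq f(a)$ for every $x\in[a,b]$, which is strictly weaker than monotonicity.

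Moreover, the gap is not a technicality that a cleverer argument could repair: the corollary as stated, with the sign condition imposed at the single base point $a$, is false for $0<\alpha<1$. Take $a=0$, $f(x)=x(1-x)$ and $b=(2-\alpha)/2$. A direct computation gives
\begin{equation*}
D_0^{\alpha}f(x)\;=\;\frac{1}{\Gamma(1-\alpha)}\int_0^x (x-t)^{-\alpha}(1-2t)\,dt\;=\;\frac{x^{1-\alpha}}{\Gamma(2-\alpha)}\left(1-\frac{2x}{2-\alpha}\right),
\end{equation*}
which is non-negative on all of $(0,b)$, yet $f$ is strictly decreasing on $(1/2,\,b)$; only at $\alpha=1$ does the threshold $(2-\alpha)/2$ collapse to $1/2$ and the classical statement reappear. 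The memory of the early increase of $f$ keeps the fractional derivative positive past the maximum, which is precisely why the base-point consistency you worried about cannot be established. A correct statement must therefore either weaken the conclusion to $f(x)\geq f(a)$ on $[a,b]$ --- which is all the Generalized Mean Value Theorem yields, is what your fallback argument proves, and is what the invariance computation in \eqref{limitada} really uses --- or strengthen the hypothesis to $D_{x_1}^{\alpha}f\geq 0$ on $(x_1,b)$ for \emph{every} base point $x_1\in[a,b)$, under which your two-point argument (and the reduction of part (2) to part (1) via $-f$) goes through verbatim.
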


\noindent This proves the main theorem.

\begin{theorem}
There is a unique solution $x(t) = \left(S(t), I(t), Q(t), R(t) \right)^T$ to the system (\ref{modelo}) in the entire domain $(t \geq 0)$. Furthermore, the solution remains in $\mathbb{R}_+^4$.
\end{theorem}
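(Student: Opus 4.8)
The plan is to prove the theorem in two parts: first establishing existence and uniqueness of the solution, and then establishing invariance of the positive orthant $\mathbb{R}_+^4$. For existence and uniqueness, I would observe that the right-hand side of system \eqref{modelo} is a vector field whose components are polynomials in $(S,I,Q,R)$, hence continuous and locally Lipschitz on $\mathbb{R}_+^4$. I would therefore invoke a standard existence-uniqueness theorem for fractional-order initial value problems with Caputo derivatives (for instance the fixed-point argument on the equivalent Volterra integral equation obtained by applying the Riemann-Liouville integral \eqref{liouville} to both sides). The local Lipschitz property on any compact region, combined with the boundedness argument below, upgrades local existence to a unique global solution on $t \geq 0$.

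The heart of the argument is positivity, and here the plan is to use Corollary \ref{corolario}. For each state variable I would examine the fractional derivative on the corresponding coordinate hyperplane of $\partial \mathbb{R}_+^4$ and check that the vector field points inward (or is tangent). Concretely, restricting the $S$-equation to the face $S=0$ gives
\begin{equation}
\left. \dfrac{d^{\alpha}S}{dt^{\alpha}} \right|_{S=0} = \lambda^{\alpha} \geq 0,
\end{equation}
so by part (1) of Corollary \ref{corolario} the component $S$ cannot decrease through zero and remains non-negative. Similarly, on $I=0$ one finds $d^{\alpha}I/dt^{\alpha}\big|_{I=0}=0\geq 0$; on $Q=0$ one finds $d^{\alpha}Q/dt^{\alpha}\big|_{Q=0}=\sigma^{\alpha}I\geq 0$ using the already-established $I\geq 0$; and on $R=0$ one finds $d^{\alpha}R/dt^{\alpha}\big|_{R=0}=\theta^{\alpha}Q\geq 0$ using $Q\geq 0$. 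In each case the non-negativity of the fractional derivative on the boundary face, together with the corollary, prevents the trajectory from crossing into the region where that coordinate is negative.

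I expect the main obstacle to lie in making the boundary-face argument fully rigorous in the fractional setting, since the Caputo derivative is nonlocal and the classical ``flow points inward'' tangency reasoning for ODEs does not transfer verbatim. The clean way to handle this is to apply Corollary \ref{corolario} on the first time $\xi$ at which a given coordinate would reach zero: on that interval the relevant fractional derivative is non-negative, so the coordinate is non-decreasing there and hence cannot become negative. A subtlety worth flagging is that the positivity proofs for $Q$ and $R$ depend on the positivity of $I$ and $Q$ respectively, so the four cases should be organized to respect this dependency chain ($S,I$ first, then $Q$, then $R$) rather than treated as fully independent. Finally, to secure global existence I would close the argument by summing the equations to bound the total population $N=S+I+Q+R$, obtaining $d^{\alpha}N/dt^{\alpha} \leq \lambda^{\alpha}-\mu^{\alpha}N$, which yields an a priori bound and rules out finite-time blow-up.
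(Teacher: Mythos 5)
Your proposal follows essentially the same route as the paper: existence and uniqueness via a standard global result for Caputo-type fractional systems with locally Lipschitz (here polynomial) right-hand side (the paper cites Theorem 3.1 and Remark 3.2 of Lin, \emph{J Math Anal Appl} 2007), and positivity via exactly the same boundary-face computation combined with Corollary \ref{corolario}. Your added care --- ordering the faces to respect the $I \to Q \to R$ dependency and deriving the a priori bound $d^{\alpha}N/dt^{\alpha} = \lambda^{\alpha} - \mu^{\alpha}N$ --- is a refinement the paper omits, but it does not change the method.
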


\begin{proof}

As we can see from Theorem 3.1 and Remark 3.2 of \cite{lin2007}, the solution of the initial value problem exists and is unique, for $t \geq 0$. Then, it is enough to prove that the non-negative orthant $\mathbb{R}_+^4$ is positively invariant. So, we must demonstrate that the vector field points to $\mathbb{R}_+^4$ in each hyperplane, thus limiting the non-negative orthant. Hence, we have:

\begin{equation}\label{limitada}
\begin{array}{lcl}
\dfrac{d^{\alpha}S}{dt^{\alpha}} \,|_{S=0}   =  \lambda^{\alpha} \geq 0 \\
\\
\dfrac{d^{\alpha}I}{dt^{\alpha}} \,|_{I=0}  =  0 \\
\\
\dfrac{d^{\alpha}Q}{dt^{\alpha}} \,|_{Q=0}  =  \sigma^{\alpha}I  \geq 0 \\
\\
\dfrac{d^{\alpha}R}{dt^{\alpha}} \,|_{R=0}    =  \theta^{\alpha}Q \geq 0 .
\end{array}
\end{equation}
\end{proof}

\medskip

\noindent According to the Corollary \ref{corolario}, it can be concluded that the solution remains in $\mathbb{R}_+^4$.

\section{Reproduction number and disease-free equilibria} \label{R0_SFE}
In this section we compute the reproduction number, $\mathcal{R}_0$ of the model (\ref{modelo}). Basic reproduction number is the number of secondary infections caused by a single infected person in a susceptible population \cite{vandendriessch}.

A disease-free equilibrium of the model (\ref{modelo}) is obtained via imposing $I = Q = R = 0$. Then we get: 

\begin{equation*}
\label{eq_point}
\begin{array}{lcl}
E^{\star} &=& (S^{\star}, I^{\star}, Q^{\star}, R^{\star}) \\
\\
&=& \left(\dfrac{\lambda^{\alpha}}{\mu^{\alpha}}, 0, 0, 0 \right) .
\end{array}
\end{equation*}

\medskip

\noindent Using Lemma 1 of \cite{vandendriessch} in system (\ref{modelo}), the matrices for the new infection terms, $F$, and the remaining terms, $V$, are the following:

\begin{equation}\label{FV2}
\begin{array}{lcl}
F=\left(\begin{array}{cc}
\beta^{\alpha} S^{\star} + r^{\alpha} R^{\star} & \beta^{\alpha}I^{\star} \\ 
\\
0 & 0
\end{array}\right) \\
\\
V=\left(\begin{array}{cc}
\sigma^{\alpha} + \mu^{\alpha} & 0 \\ 
\\
\beta^{\alpha} S^{\star} & \beta^{\alpha} I^{\star} + \mu^{\alpha}
\end{array}\right) ,
\end{array}
\end{equation}

\medskip

\noindent and the associative basic reproduction number is given by:

\begin{equation}\label{R0}
\begin{array}{lcl}
\mathcal{R}_0 = \rho \left(FV^{-1}\right) = \dfrac{\beta^{\alpha} \lambda^{\alpha}}{\mu^{\alpha} (\sigma^{\alpha} + \mu^{\alpha}) } ,
\end{array}
\end{equation}

\medskip

\noindent where $\rho$ is the spectral radius of the matrix $FV^{-1}$. 

By Theorem 2 of \cite{vandendriessch} we obtain the Lemma \ref{estabilidadeR0}.

\begin{lemma}\label{estabilidadeR0}
The disease-free equilibrium $E^{\star}$ is locally asymptotically stable if $\mathcal{R}_0 < 1$ and unstable if $\mathcal{R}_0 > 1$.
\end{lemma}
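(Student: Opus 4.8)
The plan is to linearise the fractional system \eqref{modelo} about $E^{\star}$, locate the eigenvalues of the resulting Jacobian, and then decide stability by means of the fractional (Matignon) criterion: for a Caputo system of order $\alpha\in(0,1]$, an equilibrium is locally asymptotically stable whenever every eigenvalue $\lambda_i$ of the Jacobian satisfies $|\arg(\lambda_i)|>\alpha\pi/2$, and it is unstable as soon as one eigenvalue violates this bound. The reproduction number $\mathcal{R}_0$ enters through the next-generation decomposition $(F,V)$ of \eqref{FV2}, which controls precisely the sign of the real parts of the eigenvalues associated with the infected compartments.

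First I would compute the Jacobian $J(E^{\star})$ of the right-hand side of \eqref{modelo}. Since $I^{\star}=Q^{\star}=R^{\star}=0$, ordering the variables so that the disease-type compartments come first makes $J(E^{\star})$ block triangular: one block governs the susceptible direction and yields the eigenvalue $-\mu^{\alpha}$, while the complementary block acting on the infected compartments is exactly $F-V$ from \eqref{FV2}. The eigenvalue $-\mu^{\alpha}$ is real and negative, so $|\arg(-\mu^{\alpha})|=\pi>\alpha\pi/2$ for every admissible $\alpha$, and it never threatens stability; the whole question therefore reduces to the spectrum of $F-V$.

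Next I would invoke the analysis underlying Theorem~2 of \cite{vandendriessch}. By construction $F\ge 0$ and $V$ is a non-singular $M$-matrix, so $F-V$ is a Metzler (essentially non-negative) matrix and its spectral abscissa is attained at a real eigenvalue. The standard $M$-matrix identity shows that every eigenvalue of $F-V$ has negative real part exactly when $\rho(FV^{-1})=\mathcal{R}_0<1$, whereas for $\mathcal{R}_0>1$ the dominant eigenvalue is real and strictly positive. In the regime $\mathcal{R}_0<1$ all eigenvalues of $J(E^{\star})$ then have negative real part, hence argument of modulus exceeding $\pi/2\ge\alpha\pi/2$, and Matignon's criterion gives local asymptotic stability.

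The delicate direction is instability, and it is the step I expect to require the most care. A positive real part alone does not contradict $|\arg(\lambda)|>\alpha\pi/2$ when $\alpha<1$, since $\alpha\pi/2<\pi/2$; one truly needs an eigenvalue whose argument is small. This is where the Metzler structure is essential: when $\mathcal{R}_0>1$ the spectral abscissa of $F-V$ is a real positive number $\lambda^{*}>0$ with $\arg(\lambda^{*})=0<\alpha\pi/2$, so the Matignon bound is violated for every $\alpha\in(0,1]$ and $E^{\star}$ is unstable. Assembling the two cases yields the statement.
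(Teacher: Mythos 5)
Your proposal is correct in substance, but it takes a genuinely different route from the paper's proof. The paper's argument is elementary and fully computational: it writes out the complete $4\times 4$ linearization $\mathcal{L}(E^{\star})$, reads off all four eigenvalues explicitly, namely $\lambda_1=-\theta^{\alpha}-\mu^{\alpha}$, $\lambda_3=\lambda_4=-\mu^{\alpha}$ and $\lambda_2=\bigl(\beta^{\alpha}\lambda^{\alpha}-(\mu^{\alpha})^{2}-\mu^{\alpha}\sigma^{\alpha}\bigr)/\mu^{\alpha}$, and then checks by direct algebra that $\lambda_2<0$ exactly when $\mathcal{R}_0<1$ and $\lambda_2>0$ exactly when $\mathcal{R}_0>1$. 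You instead exploit the block-triangular structure of the Jacobian, the van den Driessche--Watmough equivalence \cite{vandendriessch} between the sign of the spectral abscissa of $F-V$ and the position of $\rho(FV^{-1})$ relative to $1$, and Matignon's criterion $|\arg\lambda_i|>\alpha\pi/2$. Your route is less elementary, but it supplies something the paper omits entirely: the paper never invokes any fractional stability criterion, concluding stability from ``negative real part'' and instability from $\lambda_2>0$ as if the system were of integer order. That is harmless in the paper only because its computed spectrum happens to be real; as you correctly observe, for $\alpha<1$ a positive real part of a complex eigenvalue does not by itself violate Matignon's bound, and your Metzler/Perron--Frobenius argument that the dominant eigenvalue is real and positive when $\mathcal{R}_0>1$ is precisely the justification the paper leaves implicit. (A minor imprecision: the non-infected block is two-dimensional, spanning $S$ and $R$, and contributes the eigenvalue $-\mu^{\alpha}$ twice; this changes nothing.)

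One concrete repair is needed, however: the infected block of the Jacobian is \emph{not} ``exactly $F-V$ from \eqref{FV2}'', because the matrices printed in \eqref{FV2} are themselves non-standard. The paper's $V$ there contains the positive off-diagonal entry $\beta^{\alpha}S^{\star}$, so it is not an $M$-matrix, and the corresponding $F-V$ is not the $(I,Q)$ block of the linearization; thus both structural hypotheses you rely on ($F\geq 0$ and $V$ a non-singular $M$-matrix) fail for those particular matrices. They do hold for the standard next-generation decomposition on the infected compartments $(I,Q)$, namely $F=\left(\begin{smallmatrix}\beta^{\alpha}S^{\star} & 0\\ 0 & 0\end{smallmatrix}\right)$ and $V=\left(\begin{smallmatrix}\sigma^{\alpha}+\mu^{\alpha} & 0\\ -\sigma^{\alpha} & \theta^{\alpha}+\mu^{\alpha}\end{smallmatrix}\right)$, whose spectral radius $\rho(FV^{-1})$ coincides with the $\mathcal{R}_0$ of \eqref{R0}. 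If you replace the reference to \eqref{FV2} by this decomposition (or simply verify directly that the $(I,Q)$ block is Metzler with lower-left entry $\sigma^{\alpha}\geq 0$), your proof is complete.
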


\begin{proof}
Let 

\begin{equation}\label{linear}
\begin{array}{lcl}
\mathcal{L}=\left(\begin{array}{cccc}
-\beta^{\alpha} I - \mu^{\alpha} & -\beta^{\alpha} S & 0 & 0 \\ 
\\
\beta^{\alpha} I & \beta^{\alpha} S + r^{\alpha}R - \sigma^{\alpha}  - \mu^{\alpha}  & 0 & r^{\alpha} I \\
\\
0 & \sigma^{\alpha} & - \theta^{\alpha} - \mu^{\alpha}  & 0 \\
\\
0 & -r^{\alpha}R & \theta^{\alpha}  &  r^{\alpha} I - \mu^{\alpha} 
\end{array}\right) 
\end{array}
\end{equation}

\medskip

\noindent be the matrix of linearization of the model (\ref{modelo}). Therefore, the matrix $\mathcal{L}$ around the disease-free equilibrium $E^{\star}$, takes the form:

\begin{equation}\label{linear}
\begin{array}{lcl}
\mathcal{L}(E^{\star})=\left(\begin{array}{cccc}
-\mu^{\alpha} & -\dfrac{\beta^{\alpha} \lambda^{\alpha}}{\mu^{\alpha} } & 0 & 0 \\ 
\\
0 & \dfrac{\beta^{\alpha} \lambda^{\alpha}}{\mu^{\alpha}} - \sigma^{\alpha} - \mu^{\alpha} & 0 & 0 \\
\\
0 & \sigma^{\alpha} & - \theta^{\alpha} - \mu^{\alpha}  & 0 \\
\\
0 & 0 & \theta^{\alpha}  & - \mu^{\alpha} 
\end{array}\right) .
\end{array}
\end{equation}

\medskip

\noindent The eigenvalues of $\mathcal{L}(E^{\star})$ are given by:

\begin{eqnarray}
\label{E3_stable}
\nonumber \lambda_1 &=& - \theta^{\alpha} - \mu^{\alpha} ,
\nonumber \\ 
\nonumber \\
\lambda_2  &=& \dfrac{ \beta^{\alpha} \lambda^{\alpha} - {\mu^{\alpha}}^2- \mu^{\alpha} \sigma^{\alpha}}{\mu^{\alpha}} ,\\
\nonumber \\
\nonumber \lambda_3 &=& \lambda_4 \,\,\,\, = \,\,\,\, -\mu^{\alpha} .
\end{eqnarray}

\medskip

\noindent It is easy to verify that the eigenvalues $\lambda_1$, $\lambda_3$ and $\lambda_4$ have negative real part. With regard to $\lambda_2$, there is a negative real part if

\begin{eqnarray}
\label{lambda2_stable}
\nonumber &&  \dfrac{ \beta^{\alpha} \lambda^{\alpha} - {\mu^{\alpha}}^2- \mu^{\alpha} \sigma^{\alpha}}{\mu^{\alpha}} < 0 \\
\nonumber \\
\Leftrightarrow && \nonumber \beta^{\alpha} \lambda^{\alpha} - {\mu^{\alpha}}^2 - \mu^{\alpha} \sigma^{\alpha} < 0 \\
\nonumber \\
\nonumber \Leftrightarrow&&   \beta^{\alpha} \lambda^{\alpha} < \mu^{\alpha}(\sigma^{\alpha} + \mu^{\alpha}) \\
\nonumber \\
\nonumber \Leftrightarrow &&   \dfrac{\beta^{\alpha} \lambda^{\alpha}}{\mu^{\alpha}(\sigma^{\alpha} + \mu^{\alpha})} < 1 \\
\nonumber \\
\nonumber \overset{(\ref{R0})}{\Leftrightarrow} &&   \mathcal{R}_0 < 1 .
\end{eqnarray}

\medskip

\noindent Thus, if $\mathcal{R}_0 < 1$, then all eigenvalues have negative real part. Therefore $E^{\star}$ is locally asymptotically stable under this condition. On the other hand, if 

\begin{eqnarray}
\label{lambda2_unstable}
\nonumber&&  \lambda_2 > 0 \\ 
\nonumber \\
\nonumber \Leftrightarrow &&   \dfrac{ \beta^{\alpha} \lambda^{\alpha} - {\mu^{\alpha}}^2- \mu^{\alpha} \sigma^{\alpha}}{\mu^{\alpha}} > 0 \\
\nonumber \\
\nonumber \Leftrightarrow &&   \beta^{\alpha} \lambda^{\alpha} - {\mu^{\alpha}}^2 - \mu^{\alpha} \sigma^{\alpha} > 0 \\
\nonumber \\
\nonumber \Leftrightarrow &&   \beta^{\alpha} \lambda^{\alpha} > \mu^{\alpha}(\sigma^{\alpha} + \mu^{\alpha}) \\
\nonumber \\
\nonumber \Leftrightarrow &&   \dfrac{\beta^{\alpha} \lambda^{\alpha}}{\mu^{\alpha}(\sigma^{\alpha} + \mu^{\alpha})} > 1 \\
\nonumber \\
\nonumber \overset{(\ref{R0})}{\Leftrightarrow} &&  \mathcal{R}_0 > 1 ,
\end{eqnarray}

\medskip

\noindent then $\lambda_2 > 0$. Therefore $E^{\star}$ is unstable.
\end{proof}

\subsection{Sensitivity analysis}

Sensitivity indices allow us to have a perspective on the relative change of a variable when a parameter varies. This sensitivity index is the ratio between the relative change in the variable and the relative change in the parameter. When the variable, $v$, is a differentiable function of these parameters, $p$, the sensitivity index can be calculated through partial derivatives, using the following expression \cite{chitnis2008}:

\begin{equation}\label{sens_formula}
	\begin{array}{lcl}
	\gamma_p^v = \dfrac{\partial v}{\partial p} \times \dfrac{p}{v} .
\end{array}
\end{equation}

\medskip

\noindent In the case of $\mathcal{R}_0$, comes that

\begin{equation}\label{RR}
	\begin{array}{lcl}
	\gamma_p^{\mathcal{R}_0} = \dfrac{\partial \mathcal{R}_0}{\partial p} \times \dfrac{p}{\mathcal{R}_0} ,
\end{array}
\end{equation}

\medskip

\noindent From (\ref{R0}) and (\ref{RR}) we analyzed how the sensitivity indices $\lambda^{\alpha}$, $\beta^{\alpha}$ and $\sigma^{\alpha}$ influence the basic reproduction number. We found that the sensitivity indices do not depend on the values of parameters $\lambda^{\alpha}$ and $\beta^{\alpha}$. However, we noticed that it depends on the value of parameter $\sigma^{\alpha}$. To perform this calculation, we use $\sigma^{\alpha} = 1.69 \times 10^{-2}$ (see Table \ref{tabela}) and we obtained the results given in Table \ref{indices}.


\begin{table}[!h]
\caption{Sensitivity indices for relevant parameters of model (\ref{modelo}). Parameter values are given in the Table \ref{tabela}}
\medskip
\centering
\def\arraystretch{1.8} 
\begin{tabular}{  c lcl c|}
\hline
\textbf{Index} & & \textbf{Sensitivity index sign} \\
\hline

$\gamma_{\lambda^{\alpha}}^{\mathcal{R}_0}$ && $+ 1.00$  \\ 

$\gamma_{\beta^{\alpha}}^{\mathcal{R}_0}$ && $+ 1.00$   \\

$\gamma_{\sigma^{\alpha}}^{\mathcal{R}_0} $ && $- 0.98$  \\
\hline
\end{tabular}
\label{indices}
\end{table}

The sensitivity indices signs of Table \ref{indices} give us information about the variation of the value of $\mathcal{R}_0$. We concluded that the parameter $\lambda^{\alpha}$ and $\beta^{\alpha}$ contribute to the spread of the disease. This means that when the values of recruitment and contact rate between susceptible/recovered and infected individuals increase, the number of infected people also increases. The magnitude of $\lambda^{\alpha}$ and $\beta^{\alpha}$ is positive and of equal value. On the other hand, the rate at which infected people are quarantined, $\sigma^{\alpha}$, has an opposite effect. Thus, the isolation rate slows the spread of the disease.

\section{Numerical results} \label{RESULTS}

We simulate the model (\ref{modelo}) for distinct values of the order of the fractional derivative, $\alpha \in [0, 1]$, and for biologically relevant parameters. We apply the predictor-corrector PECE method of Adams-Bashford-Moulton type \cite{FOsubroutine}. We use epidemiological parameter values based on early estimation of novel coronavirus CoViD-19 provided in \cite{springer_reinfection}, \cite{springer} and \cite{Read2020} (please see Table \ref{tabela}). We then assume that they are valid from the context of our work. The initial conditions are:

\begin{eqnarray}
\label{ini_cond}
S(0) = 153 \,\,, \quad I(0) = 138 \,\,, \quad Q(0) = 68 \quad \text{and} \quad R(0) = 20, 
\end{eqnarray}

\medskip

\noindent provided in \cite{springer}. 

\begin{table}[!h]
\caption{Parameter values used in numerical simulations of model (\ref{modelo})}
\medskip
\centering
\scalebox{0.85}{
\def\arraystretch{1.3}
\begin{tabular}{ l c lc  cl }
\hline
\textbf{Parameter} & \textbf{Symbol} & \textbf{Value} & \textbf{Reference}  \\
\hline
Recruitment rate of susceptible individuals & $\lambda^{\alpha}$ & $1.45 \times 10^{-1}$  & \cite{springer}  \\
Contact rate with infected individuals & $\beta^{\alpha}$ & $3.80 \times 10^{-4}$  & \cite{springer} \\
Isolation rate of infected individuals & $\sigma^{\alpha}$ & $1.69 \times 10^{-2}$ & \cite{springer}  \\
Recovery rate of isolated individuals & $\theta^{\alpha}$ & $1.81 \times 10^{-2}$  & \cite{springer} \\
Susceptibility due to previous infection & $p^{\alpha}$ & $[0, 1)$ & \cite{springer_reinfection}, \cite{Read2020}  \\
Natural death rate of individuals & $\mu^{\alpha}$ & $4.10 \times 10^{-4}$ & \cite{springer} \\
Reinfection rate of recovered individuals & $r^{\alpha}$ & $[0, 3.80 \times 10^{-4})$ & \cite{springer_reinfection}, \cite{Read2020} \\
\hline
\end{tabular}
}
\label{tabela}
\end{table}
Figure \ref{comportamento} shows the behaviour of population classes of model (\ref{modelo}) for $\alpha = \{0.96, 0.98, 1\}$. Over time, the density of infected people reaches its peak (around 200 people) after approximately 25 days. However, the number of infected individuals tends to decrease to less than 100. One of the factors that may influence this decrease is the increase in the number of isolated and recovered people. This happens regardless of the value of $\alpha$. Furthermore, all three simulations show the same asymptotic behavior.

In Figures \ref{sigma} and \ref{r} we varied the values of the parameters $\sigma^{\alpha}$ and $r^{\alpha}$ within the same order of magnitude to analyze how they affect individuals' behavior in each class. In Figure \ref{sigma}, the dynamics of infected and isolated individuals were simulated for three isolation rates, $\sigma^{\alpha}$, considering different $\alpha$ values. In the first days, it is observed that the number of infected people decreases with the increase of people in quarantine. Moreover, the higher the isolation rate, the greater the decrease of infected individuals. The number of CoViD-19 positive is controlled through the isolation of confirmed cases, preventing the spread of the disease. This causes the number of infected people to decrease in the long run. As a result, the number of isolated ones slowly decreases. This happens regardless of the value of $\alpha$. Furthermore, the lower the value of the order of the fractional derivative, the lower the number of infected people.

Figure \ref{r} displays the behavior of infected and isolated individuals considering three values of the reinfection rate, $r^{\alpha}$. Increasing the reinfection rate increases the number of sick people, regardless of the value of $\alpha$. With the increase of infected individuals, the number of people who become isolated also increases. This behavior is independent of the value of the derivative of FO. In addition, the lower the value of $\alpha$ the lower the number of sick people and people in isolation. 

Figure \ref{contour} describes the density of infected and recovered individuals in the first 1000 days, for different combinations of isolation and recovery rates, $\sigma^{\alpha}$ and $\theta^{\alpha}$ respectively, and $\alpha = 1$. On the left, low recovery rate promotes a higher number of CoViD-19 patients (around 350 people). In general, higher values of $\theta^{\alpha}$ mean fewer people are infected. On the right, high recovery rates combined with values greater than $10^{-2}$ of isolation rate promotes a relatively high number of people recovered from the disease (greater than 140 people).

\begin{figure}[!]
\center
\includegraphics[scale=0.40]{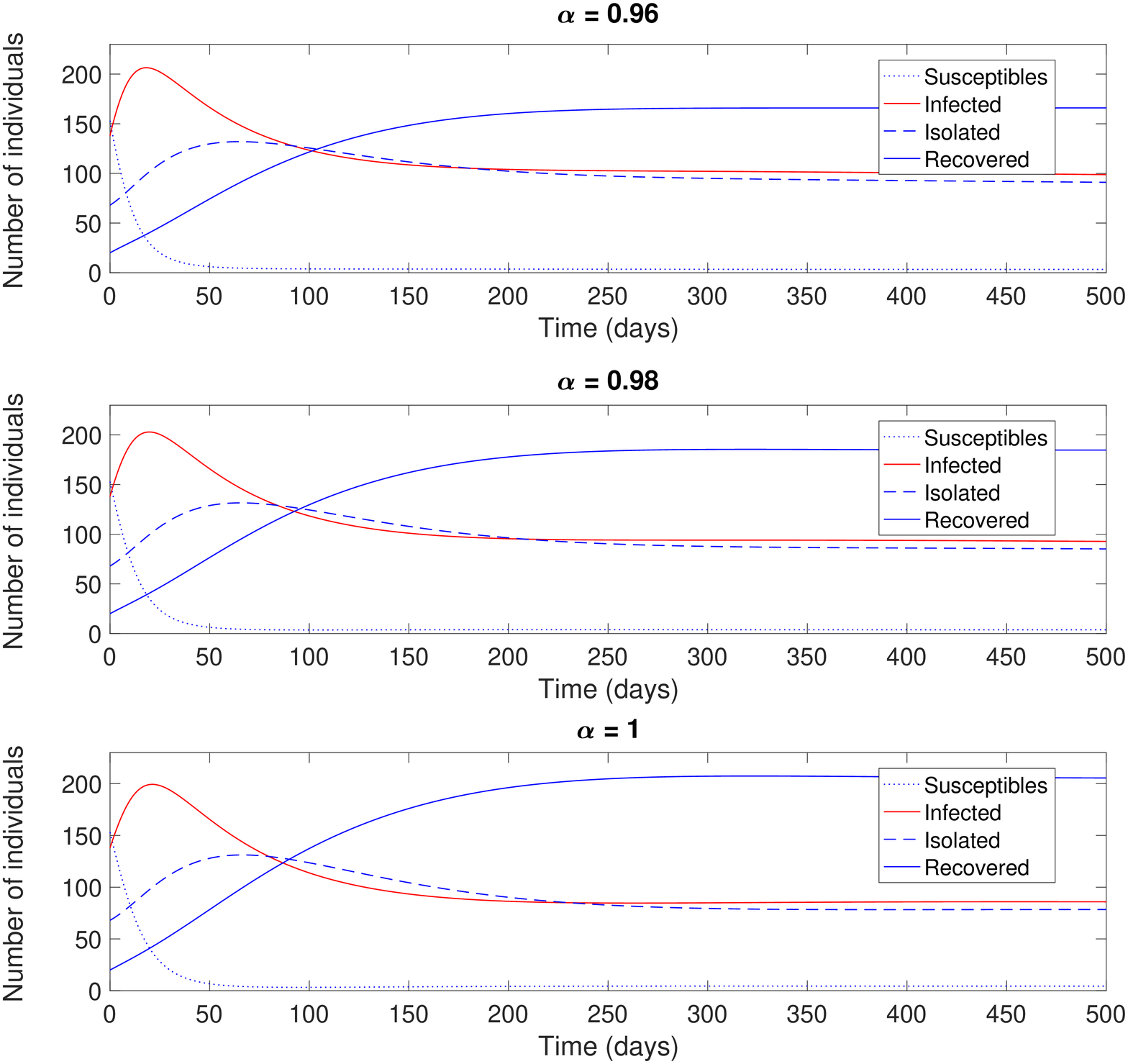}
\caption{Dynamics of model (\ref{modelo}) for $\alpha = \{0.96, 0.98, 1\}$. Initial conditions and parameter values are given in \eqref{ini_cond} and in the Table \ref{tabela}, respectively.}
\label{comportamento}
\end{figure}

\begin{figure}[!]
\center
\includegraphics[scale=0.29]{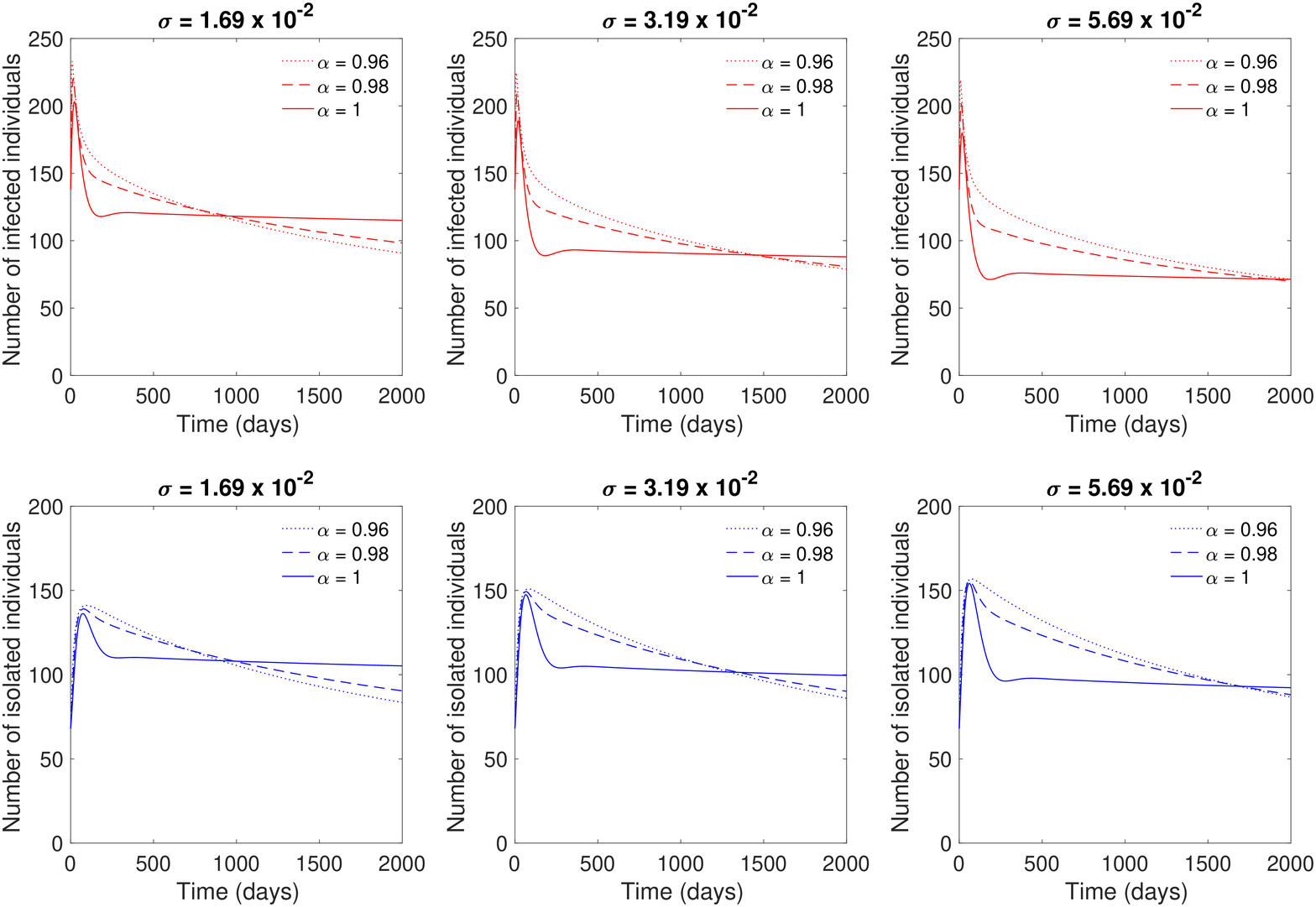}
\caption{Dynamics of infected and isolated individuals. Initial conditions and parameter values are given in \eqref{ini_cond} and in the Table \ref{tabela}, respectively, except $\sigma = \{1.69 \times 10^{-2}, 3.19 \times 10^{-2}, 5.69 \times 10^{-2}\}$, $r=0.30 \beta$ and $\alpha = \{0.96, 0.98, 1\}$.}
\label{sigma}
\end{figure}

\begin{figure}[!]
\center
\includegraphics[scale=0.29]{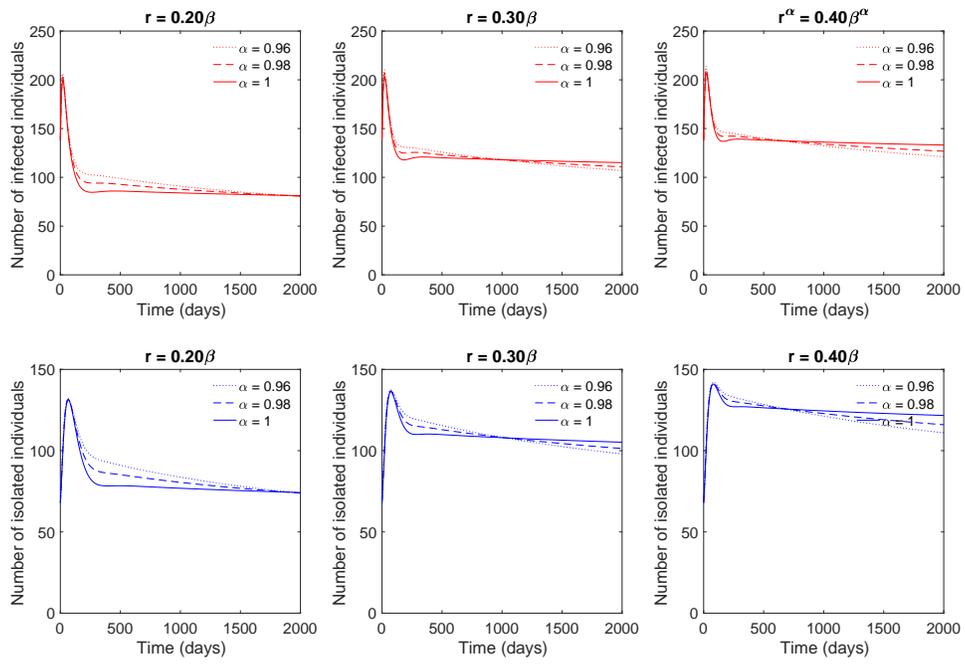}
\caption{Dynamics of infected and isolated individuals. Initial conditions and parameter values are given in \eqref{ini_cond} and in the Table \ref{tabela}, respectively, except $r = \{0.20\beta, 0.30\beta, 0.40\beta \}$ and $\alpha = \{0.96, 0.98, 1\}$.}
\label{r}
\end{figure}

\begin{figure}[!]
\center
\includegraphics[scale=0.37]{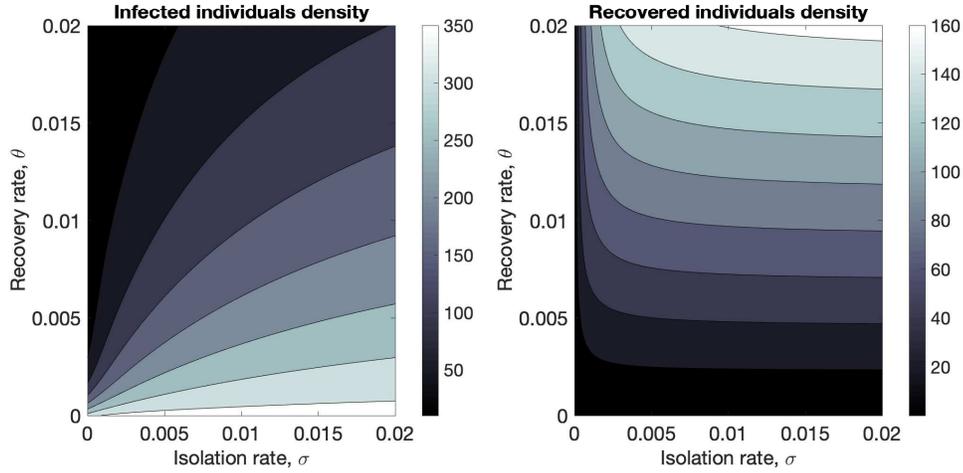}
\caption{Infected (left) and recovered (right) individuals density after 1000 days, considering some combinations of isolation $\sigma$, and recovered $\theta$, rates, for $\alpha = 1$. Initial conditions and parameter values are given in \eqref{ini_cond} and in the Table \ref{tabela}, respectively.}
\label{contour}
\end{figure}

\section{Conclusion} \label{conclusions}

In this work, a FO model for the dynamics of a population in the presence of CoViD-19 was formulated and analyzed.

From a theoretical point of view, the basic reproduction number was calculated and the impact of the parameters of the model was discussed. Local stability around the disease-free equilibrium point was proven for $\mathcal{R}_0 < 1$ (see Lemma \ref{estabilidadeR0}).

From a numerical point of view the model was simulated for relevant parameters. The isolation of people with CoViD-19 leads to a decline in the number of infected people. On the other hand, the lessening of the disease in the population leads to a smaller need for isolation. A decrease of the FO derivative $\alpha$, results in fewer people being infected and isolated over time. We can see these three results through Figure \ref{sigma}. A greater susceptibility to a new infection increases the number of people infected and causes CoViD-19 to firmly persist in the population. So, the higher the rate of reinfection in the population, the more people will become infected. Consequently, the number of people in quarantine will also increase (see Figure \ref{r}). Regardless of the value of the reinfection rate, low $\alpha$ values reflect fewer people with CoViD-19 and fewer people in quarantine, as we can also see in Figure \ref{r}. Moreover, a high recovery rate and an isolation rate above $10^{-2}$ is reflected in a population with fewer patients CoViD-19. Furthermore, a population with a small number of people recovered from the disease cannot reduce the number of infected individuals (see Figure \ref{contour}). Consequently, the disease spreads faster in the population. Although the simulations were performed for a small standard population, it is assumed that the results also apply to larger populations, since the parameter estimation was done on large populations \cite{Read2020}. 

Model analysis and predictions provide crucial data that can be used in treatment strategies and prevention. Moreover, they are the tools needed to demonstrate the impact of a social or behavioural intervention in a population. This could help policy makers to devise strategies to reduce heavy economic and social burden of SARS-CoV-2 infection in the world. In this paper, numerical results illustrate the dynamics of a standard population in the presence of CoViD-19. They also show the effect of reinfection and quarantine on the number of infections. The results suggest that policy makers should consider specific measures to reduce SARS-CoV-2 infection such as: developing campaigns to alert individuals on how to avoid contact with each other, reducing contagion as much as possible; explaining how reinfection can be a decisive factor in increasing the number of infected people; raising media awareness of the importance of quarantine in fighting the pandemic, among others. 

The order of the fractional derivative may provide better fits to real data from patients infected with SARS-CoV-2, as previously seen for other fractional order models for CoViD-19 \cite{JPCarvalho, Ahmad}. The model does not include the impact of vaccination on the population. In our future work we will study population dynamics in the presence of different levels of vaccine efficiency for CoViD19.

\section*{References}

\end{document}